\documentclass{article}

\usepackage{amsmath}
\usepackage{amsfonts}
\usepackage{amssymb}
\usepackage{amsthm}
\usepackage{enumerate}
\usepackage{bbm}
\usepackage{hyperref}

\newtheorem{theorem}{Theorem}[section]
\newtheorem{lemma}[theorem]{Lemma}
\newtheorem{claim}[theorem]{Claim}

\newtheorem{corollary}[theorem]{Corollary}

\newtheorem{observation}[theorem]{Observation}

\newcommand{\suchthat}{\;\ifnum\currentgrouptype=16 \middle\fi|\;}

\newcommand{\summ}{\displaystyle\sum}

\newcommand{\F}{\mathbb{F}}

\newcommand*{\defeq}{\stackrel{\text{def}}{=}}

\begin{document}
\title{Cauchy-Davenport Theorem for linear maps: Simplification and Extension}
\author{
John Kim\thanks{Department of Mathematics, Rutgers University.  Research supported in part by NSF Grant Number DGE-1433187. {\tt jonykim@math.rutgers.edu}.} \and
Aditya Potukuchi\thanks{Department of Computer Science, Rutgers University. {\tt aditya.potukuchi@cs.rutgers.edu}.}}
\maketitle

\begin{abstract} 
We give a new proof of the Cauchy-Davenport Theorem for linear maps given by Herdade et al., (2015) in~\cite{HKK}. This theorem gives a lower bound on the size of the image of a linear map on a grid. Our proof is purely combinatorial and offers a partial insight into the range of parameters not handled in~\cite{HKK}.
\end{abstract}

\section{Introduction}

Let $\F_p$ be the field containing $p$ elements, where $p$ is a prime, and let $A,B \subseteq \F_p$. The Cauchy-Davenport Theorem gives a lower bound on the size of the sumset $A + B \defeq \{a + b \suchthat a \in A,b \in B\}$ (for more on sumsets, see, for example,~\cite{TV}). The size of the sumset can be thought of as the size of the image of the linear map $(x,y) \rightarrow x + y$, where $x \in A$, and $y \in B$. Thus the theorem can be restated as follows:


\begin{theorem}[Cauchy-Davenport Theorem]
Let $p$ be a prime, and $L:\F_p \times \F_p \rightarrow \F_p$ be a linear map that takes $(a,b)$ to $a+b$. For $A,B \subseteq \F_p$, Let $L(A,B)$ be the image of $L$ on $A \times B$. Then, 

$$
|L(A,B)| \geq \min(|A|+|B|-1,p)
$$
\end{theorem}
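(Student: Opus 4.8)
The plan is to prove the Cauchy–Davenport Theorem by the combinatorial polynomial method, which fits the combinatorial spirit of the paper. Suppose for contradiction that $|L(A,B)| = |A+B| \le \min(|A|+|B|-2, p-1)$. Set $m = |A+B|$, so $m \le p-1$ and $m \le |A|+|B|-2$. The idea is to build a low-degree polynomial that vanishes on all of $A \times B$ for degree reasons, and then derive a contradiction via the Combinatorial Nullstellensatz (or, if we want to stay fully elementary, via a direct dimension count on polynomials).

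First I would consider the polynomial $g(x,y) = \prod_{c \in A+B}(x + y - c)$, which has degree exactly $m$ and vanishes identically on $A \times B$, since for every $(a,b) \in A \times B$ the value $a+b$ lies in $A+B$. Next, I would pick nonnegative integers $s, t$ with $s \le |A|-1$, $t \le |B|-1$, and $s + t = m$; this is possible precisely because $m \le |A| + |B| - 2$. I would then look at the coefficient of the monomial $x^s y^t$ in $g$. Expanding $\prod_{c}(x+y-c)$, the total-degree-$m$ part is $(x+y)^m$, so the coefficient of $x^s y^t$ in $g$ equals $\binom{m}{s}$, which is nonzero in $\F_p$ because $m \le p-1$ forces $0 < s, t < p$ and hence $p \nmid \binom{m}{s}$.

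The key step is then the Combinatorial Nullstellensatz: if $g$ vanishes on $A \times B$ and $\deg g = s + t$ with the coefficient of $x^s y^t$ nonzero, while $|A| > s$ and $|B| > t$, we get a contradiction. Concretely, one reduces $g$ modulo $\prod_{a \in A}(x-a)$ and $\prod_{b \in B}(y-b)$ to a polynomial $\tilde g$ of degree $< |A|$ in $x$ and $< |B|$ in $y$ that still vanishes on all of $A \times B$; since a polynomial with these degree bounds vanishing on the full grid $A \times B$ must be the zero polynomial, $\tilde g \equiv 0$, yet the reduction does not affect the coefficient of $x^s y^t$ (as $s < |A|$ and $t < |B|$), so that coefficient is both $\binom{m}{s} \ne 0$ and $0$ — contradiction. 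I expect the main obstacle to be presenting the Nullstellensatz reduction self-containedly at the right level of detail: one must argue carefully that reducing the degrees does not disturb the targeted coefficient and that a nonzero polynomial of individual degrees $<|A|$, $<|B|$ cannot vanish on all $|A|\cdot|B|$ grid points. Once that lemma is in hand, the choice of $s,t$ and the binomial-coefficient computation are routine, and unwinding the contradiction gives $|L(A,B)| \ge \min(|A|+|B|-1, p)$.
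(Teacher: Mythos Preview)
The paper does not actually prove the Cauchy--Davenport Theorem; it is stated as a classical result and then used as a black box, supplying both the base case and the key inequality~(\ref{CD}) in the induction of Lemma~\ref{lem:main}. So there is no proof in the paper to compare yours against.

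Your argument is correct: it is the standard Alon--Nathanson--Ruzsa proof via the Combinatorial Nullstellensatz~\cite{Alon}. One tiny imprecision: you write that $m\le p-1$ forces $0<s,t<p$, but in fact $s$ or $t$ may equal $0$; this is harmless, since $0\le s\le m<p$ already guarantees that $\binom{m}{s}$ is a unit in $\F_p$. You should also tacitly assume $A,B\neq\emptyset$, as the statement is vacuous otherwise.

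One remark on fit: the paper's explicit aim is to replace the Nullstellensatz argument of~\cite{HKK} by a purely combinatorial proof that \emph{assumes} Cauchy--Davenport. So although your proof is valid, it leans on precisely the tool the paper is trying to circumvent; a proof more in the paper's spirit would be one of the elementary transform-based arguments (Davenport transform or $e$-transform), or simply a citation.
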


In~\cite{HKK}, this notion was extended to study the sizes of images of general linear maps on product sets. A lower bound was proved using the polynomial method (via a nonstandard application of  the Combinatorial Nullstellensatz~\cite{Alon}). In this paper, we give a simpler, and combinatorial proof of the same using just the Cauchy-Davenport Theorem. \\

Notation: For a linear map $L : \F_p^{n} \rightarrow \F_p^m$, and for $S_1, S_2, \ldots S_n \subseteq \F_p$, we use $L(S_1, S_2 \ldots S_n)$ to denote the image of $L$ on $S_1 \times S_2 \times \cdots S_n$. The \emph{support} of a vector is the set of nonzero entries in the vector. A \emph{min-support vector} in a set $V$ of vectors is a nonzero vector of minimum support size in $V$.

\begin{theorem}[Main Theorem]
\label{the:main}
Let $p$ be a prime, and $L:\F_p^{m+1} \rightarrow \F_p^m$ be a linear map of rank $m$. Let $A_1, A_2, \ldots A_{m+1} \subseteq \F_p$ with $|A_i| = k_i$. Further, suppose that $\min_i(k_i) + \max_i(k_i) < p$. Let $S$ be the support of $\ker(L)$, and $S' = [n] \setminus S$. Then

$$
|L(A_1,A_2, \ldots, A_n)| \geq \left( \prod_{j \in S'}k_j \right) \cdot \left( \prod_{i\in S}k_i - \prod_{i \in S}(k_i - 1) \right)
$$
\end{theorem}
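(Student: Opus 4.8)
The plan is to recast $|L(A_1,\dots,A_n)|$ as the number of classes into which the grid $G=A_1\times\cdots\times A_n$ is partitioned by the relation $x\sim x'\iff x-x'\in\ker(L)$, since $L$ collapses exactly the points of a common fibre. Fix a nonzero vector $v$ spanning $\ker(L)$, so $\supp(v)=S$. As $v_j=0$ for every $j\in S'$, any two equivalent points of $G$ agree on all coordinates in $S'$; hence the partition refines the partition of $G$ by its $S'$-coordinates, and on each of the $\prod_{j\in S'}k_j$ blocks the induced relation on $\prod_{i\in S}A_i$ is just ``$u-u'$ is a scalar multiple of $v|_S$''. Therefore $|L(A_1,\dots,A_n)|=\bigl(\prod_{j\in S'}k_j\bigr)\cdot N$, where $N$ is the number of classes of $\prod_{i\in S}A_i$ modulo $\langle v|_S\rangle$, and it remains to prove $N\ge\prod_{i\in S}k_i-\prod_{i\in S}(k_i-1)$.

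I would establish this last inequality by induction on $|S|$. If $|S|=1$ then all of $A_i$ forms a single class and $N=1=k_i-(k_i-1)$. For the inductive step, reorder the coordinates so that a coordinate $i^\ast\in S$ carrying the largest value $k_{i^\ast}$ (among $i\in S$) comes last, and write $w=v|_{S\setminus\{i^\ast\}}$. Using $v_{i^\ast}\neq 0$ to eliminate the coset parameter, the points $(y,a)$ and $(y',a')$ of $\prod_{i\in S}A_i$ are equivalent exactly when $y-\tfrac{a}{v_{i^\ast}}w=y'-\tfrac{a'}{v_{i^\ast}}w$; hence $N=|B+D|$ where $B=\prod_{i\in S\setminus\{i^\ast\}}A_i$ and $D=\{-\tfrac{a}{v_{i^\ast}}w:a\in A_{i^\ast}\}$ is a set of $k_{i^\ast}$ points lying on a single line through the origin in direction $w$.

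Now $B$ is partitioned by the lines in direction $w$ into its own classes $E_1,\dots,E_{N'}$ modulo $\langle w\rangle$, so the induction hypothesis gives $N'\ge\prod_{i\in S\setminus\{i^\ast\}}k_i-\prod_{i\in S\setminus\{i^\ast\}}(k_i-1)$. Since each $E_r+D$ stays on the line through $E_r$ in direction $w$, and these lines are distinct and parallel, $B+D$ is their disjoint union, so $N=\sum_r|E_r+D|$; the Cauchy-Davenport Theorem applied on each line, identified with $\F_p$, yields $|E_r+D|\ge\min(|E_r|+k_{i^\ast}-1,\,p)$. A point of $E_r$ is pinned down on its line by intersecting with the sets $A_i$ for $i\in S\setminus\{i^\ast\}$, so $|E_r|\le\min_{i\in S\setminus\{i^\ast\}}k_i$; because $i^\ast$ carries the largest value, the surviving coordinates still include one of size $\kmin$, whence $|E_r|+k_{i^\ast}-1\le\kmin+\kmax-1<p$ and the Cauchy-Davenport bound is never truncated. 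Thus $N\ge\sum_r(|E_r|+k_{i^\ast}-1)=|B|+N'(k_{i^\ast}-1)$, and inserting $|B|=\prod_{i\in S\setminus\{i^\ast\}}k_i$ together with the bound on $N'$ gives, after a one-line rearrangement, $N\ge\prod_{i\in S}k_i-\prod_{i\in S}(k_i-1)$, completing the induction.

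The step I expect to be the main obstacle is controlling the truncation in the Cauchy-Davenport estimate: the clean bound $|E_r+D|\ge|E_r|+k_{i^\ast}-1$ is only available when $|E_r|+k_{i^\ast}-1<p$. Peeling off a coordinate of \emph{maximum} size is exactly what is needed to guarantee this, since it forces every slice $E_r$ of $B$ to have at most $\kmin$ points while $|D|=\kmax$, and this is the one place where the hypothesis $\kmin+\kmax<p$ enters. The remaining ingredients---that the classes of $B$ occupy distinct parallel lines and that $D$ lies in their common direction, so that $B+D$ genuinely decomposes as a disjoint union of per-line sumsets---are routine, but should be checked explicitly since the additivity of the count rests on them.
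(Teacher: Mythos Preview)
Your argument is correct and arrives at the same bound, but the route is organized differently from the paper's. The paper first normalizes $L$ by row and column operations (its Lemma~\ref{lem:linop}) so that the kernel is spanned by an all-ones vector on $S$, and then proves the bound for this standard map (Lemma~\ref{lem:main}) by induction: it slices the image according to the value $a=x_1+x_n$, applies the induction hypothesis to each slice $S_2\times\cdots\times S_{n-1}\times T_a$, and invokes Cauchy--Davenport once to lower-bound the number $|S_1+S_n|$ of nonempty slices. You bypass the normalization entirely by counting fibres of $L$ directly, and your induction runs in the opposite order: you peel off the \emph{largest} coordinate $i^\ast$, apply the induction hypothesis once to count the parallel lines $E_r$ through $B$, and then use Cauchy--Davenport separately on each line to bound $|E_r+D|$. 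In short, the paper does ``one CD, then recurse on every slice'' while you do ``recurse once, then CD on every line''. Your version is more coordinate-free and avoids the bookkeeping of the normalization step; the paper's concrete normal form, on the other hand, is what it reuses for its large-set results (Theorem~\ref{cover} and Theorem~\ref{2d}).

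One small caveat at the step you flag as the main obstacle: your claim $|E_r|\le\kmin$ really shows $|E_r|\le\min_{i\in S\setminus\{i^\ast\}}k_i$, which equals the \emph{global} $\kmin$ only when the global minimum is attained inside $S$. What the induction genuinely needs is $\min_{i\in S}k_i+\max_{i\in S}k_i\le p+1$. The paper's passage from the hypothesis of Theorem~\ref{the:main} to that of Lemma~\ref{lem:main} has the identical lacuna, so this is not a defect of your approach relative to the paper's; but it is worth stating the hypothesis over $S$ rather than over $[m+1]$ when you set up the induction.
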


As noted in~\cite{HKK}, this bound is tight for every $m$ and $p$. We restrict our theorem to study only maps from $\F_p^{m+1}$ to $\F_p^m$ of rank $m$ for two reasons mainly:(1) It is simpler to state, and contains the tight case and (2) We are unable prove any better bounds if the rank is not $m$. It is not clear to us what the correct bound for the general case is. \\

We also show the following result for the size of the image for certain full rank linear maps from $\F_p^n \rightarrow \F_p^{n-1}$ when the size of the sets it is evaluated on are all large enough.

\begin{theorem}
\label{cover}
Let $L:\mathbb{F}_p^n \rightarrow \mathbb{F}_p^{n-1}$ be a linear map given by $L(x_1,\ldots x_n) = (x_1 + x_n, x_2 + x_n \ldots x_{n-1} + x_n)$. Let $S_1, \ldots S_n \subseteq \F_p$ with $|S_i| = k$ for $i \in [n]$ such that $k > \frac{(n-1)p}{n}$, then $|L(S_1, \ldots S_n)| = p^{n-1}$ (i.e., $L(S_1, \ldots S_n) = \F_p^{n-1}$).
\end{theorem}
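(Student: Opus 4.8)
The plan is to prove surjectivity of $L$ on $S_1 \times \cdots \times S_n$ one target at a time. Fix an arbitrary $\mathbf{t} = (t_1, \ldots, t_{n-1}) \in \F_p^{n-1}$; the goal is to produce $x_i \in S_i$ with $L(x_1, \ldots, x_n) = \mathbf{t}$. Since the $i$-th coordinate of $L$ is $x_i + x_n$, once a value $x_n = c$ is fixed the remaining conditions force $x_i = t_i - c$ for every $i \in [n-1]$. Hence a preimage of $\mathbf{t}$ exists if and only if there is a common value $c$ lying in $S_n$ and in each translate $t_i - S_i$, i.e.
$$
S_n \cap \bigcap_{i=1}^{n-1}(t_i - S_i) \neq \emptyset .
$$

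The next step is a complement count. Each of the $n$ sets $S_n, t_1 - S_1, \ldots, t_{n-1} - S_{n-1}$ is a subset of $\F_p$ of size exactly $k$ (translation is a bijection), so each has complement of size $p - k$. By the union bound, the complement of the intersection above has size at most $n(p-k)$. The hypothesis $k > \tfrac{(n-1)p}{n}$ rearranges precisely to $n(p-k) < p$, so the $n$ complements cannot cover $\F_p$; therefore the intersection is nonempty and $\mathbf{t}$ has a preimage. As $\mathbf{t}$ was arbitrary, $L(S_1, \ldots, S_n) = \F_p^{n-1}$.

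I do not expect a genuine obstacle here: the content is entirely in recognizing that the fiber over $\mathbf{t}$ collapses to a single free coordinate $x_n$, after which the statement is a pigeonhole estimate. The one point worth flagging is that the quantitative threshold on $k$ is exactly what makes the union bound on the $n$ complements go through, so the hypothesis is used with no slack. I would also note in passing that the same argument applies verbatim to any linear map $(x_1 + \lambda_1 x_n, \ldots, x_{n-1} + \lambda_{n-1} x_n)$ with all $\lambda_i \neq 0$, since each fiber condition still determines $x_i$ as an affine function of $x_n$, and only a harmless rescaling of the sets $S_i$ is involved.
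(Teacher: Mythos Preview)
Your proof is correct and takes a genuinely different route from the paper. The paper proves a slightly stronger claim (allowing $|S_n|=k'$ to differ from the other sizes, under the hypothesis $(n-1)k+k'\geq (n-1)p+1$) by induction on $n$: the base case is Cauchy--Davenport, and the inductive step fixes the first output coordinate $a=x_1+x_n$, observes that the resulting fiber set $T_a\subseteq S_n$ has size at least $k+k'-p$, and checks that the induction hypothesis then applies to the restricted map on $S_2\times\cdots\times S_{n-1}\times T_a$. Your argument is more elementary: it bypasses both the induction and Cauchy--Davenport entirely, collapsing the problem to a one-line union bound on complements. In fact your count extends verbatim to the paper's more general claim, since the sum of complement sizes is $np-\bigl((n-1)k+k'\bigr)\leq p-1$, so nothing is lost in generality. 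The paper's approach has the virtue of reusing the same fiberwise-induction machinery as its main lemma, but yours is shorter and makes transparent exactly why the threshold $k>(n-1)p/n$ is the right one.
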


The theorems do not, however, give tight bounds for all set sizes, for example if $\min_i |A_i| > p/2$. It would be interesting to obtain a tight bound even for the simple linear map $(x,y,z) \rightarrow (x+z, y+z)$ on the product set $A_1 \times A_2 \times A_3 \subseteq \F_p^3$ which holds for all sizes of the $A_i$'s.

\section{The Theorem}

\subsection{The Main Lemma}

The idea is that since the size of the image is invariant under row operations of $L$, we perform row operations to isolate a `hard' part, which gives the main part of the required lower bound \\

Our proof proceeds by induction on the dimension of the linear map. The base case is given by the Cauchy Davenport Theorem.

\begin{lemma}
\label{lem:main}
Let $L:\mathbb{F}_p^n \rightarrow \mathbb{F}_p^{n-1}$ be a linear map such that $L(x_1,\ldots, x_n) = (x_1 + x_n, x_2 + x_n, \ldots, x_{n-1} + x_n)$. Let $S_1, \ldots S_n \subseteq \F_p$ with $|S_i| = s_i$ such that $\min_i(s_i) + \max_i(s_i) \leq p+1$. Then $|L(S_1, \ldots S_n)| \geq \prod_{i=1}^n s_i-\prod_{i=1}^n (s_i-1)$
\end{lemma}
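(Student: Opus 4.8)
The plan is to prove the lemma by induction on $n$, with the base case $n=2$ being precisely the Cauchy--Davenport Theorem: there $L(x_1,x_2)=x_1+x_2$, the hypothesis reads $s_1+s_2\le p+1$, and $|S_1+S_2|\ge\min(s_1+s_2-1,p)=s_1+s_2-1=s_1s_2-(s_1-1)(s_2-1)$. Before the inductive step I would record two harmless normalizations. First, permuting $S_1,\dots,S_{n-1}$ merely permutes the output coordinates of $L$, so it leaves $|L(S_1,\dots,S_n)|$ unchanged; and the target expression $\prod s_i-\prod(s_i-1)$ is symmetric. Hence I may assume $s_{n-1}=\min(s_1,\dots,s_{n-1})$, which forces $\min(s_{n-1},s_n)=\min_i s_i$.

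For the inductive step I would fiber the image over its last coordinate. Let $\pi\colon L(S_1,\dots,S_n)\to\F_p$ be projection onto $y_{n-1}=x_{n-1}+x_n$; its image is the sumset $\Sigma=S_{n-1}+S_n$. For $t\in\Sigma$ put $C_t=S_n\cap(t-S_{n-1})$ and $\gamma_t=|C_t|\ge 1$. Unwinding the definition of the image shows that the fiber $\pi^{-1}(t)$ is exactly $L'(S_1,\dots,S_{n-2},C_t)$, where $L'\colon\F_p^{n-1}\to\F_p^{n-2}$ is the analogous map $L'(z_1,\dots,z_{n-1})=(z_1+z_{n-1},\dots,z_{n-2}+z_{n-1})$. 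Two identities drive the count: $\sum_{t\in\Sigma}\gamma_t=|S_{n-1}|\,|S_n|=s_{n-1}s_n$ (each pair $(a,b)\in S_{n-1}\times S_n$ is counted once, in the fiber over $t=a+b$), and $|\Sigma|\ge s_{n-1}+s_n-1$ by Cauchy--Davenport, which applies since $s_{n-1}+s_n\le\min_i s_i+\max_i s_i\le p+1$.

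The crux, and what I expect to be the main obstacle, is checking that the induction hypothesis is actually available for every sub-problem $L'(S_1,\dots,S_{n-2},C_t)$, i.e.\ that $\min(s_1,\dots,s_{n-2},\gamma_t)+\max(s_1,\dots,s_{n-2},\gamma_t)\le p+1$. This is where the normalization is used: from $C_t\subseteq S_n$ and $C_t\subseteq t-S_{n-1}$ we get $\gamma_t\le\min(s_{n-1},s_n)=\min_i s_i$, so $\gamma_t$ realizes the minimum of the sub-collection of sizes while $\max(s_1,\dots,s_{n-2})\le\max_i s_i$; hence the sum in question is at most $\min_i s_i+\max_i s_i\le p+1$. (When $n=3$ this sub-problem is again just Cauchy--Davenport, and the same inequality licenses it.) Without the freedom to choose a minimum-size set among the two that get "used up" by the projection, the recursion could leave $C_t$ too large for the hypothesis to survive.

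Finally I would assemble the pieces. Writing $P=\prod_{i=1}^{n-2}s_i$ and $Q=\prod_{i=1}^{n-2}(s_i-1)$, the induction hypothesis gives $|\pi^{-1}(t)|\ge P\gamma_t-Q(\gamma_t-1)=(P-Q)\gamma_t+Q$ for each $t\in\Sigma$. Summing over $t\in\Sigma$ and substituting $\sum_{t\in\Sigma}\gamma_t=s_{n-1}s_n$ and $|\Sigma|\ge s_{n-1}+s_n-1$ yields $|L(S_1,\dots,S_n)|\ge(P-Q)s_{n-1}s_n+Q(s_{n-1}+s_n-1)$, and the elementary identity $(P-Q)s_{n-1}s_n+Q(s_{n-1}+s_n-1)=Ps_{n-1}s_n-Q(s_{n-1}-1)(s_n-1)=\prod_{i=1}^{n}s_i-\prod_{i=1}^{n}(s_i-1)$ closes the induction. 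The only real content beyond bookkeeping is the normalization step that keeps $\min+\max\le p+1$ alive under the recursion; everything else is a disjoint-fiber count plus one polynomial identity that makes the bound come out exactly.
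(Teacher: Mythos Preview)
Your proposal is correct and is essentially the same argument as the paper's: induct on $n$, fiber the image over one output coordinate, apply the induction hypothesis to each fiber, and sum using $\sum_t \gamma_t = s_{n-1}s_n$ together with the Cauchy--Davenport bound $|S_{n-1}+S_n|\ge s_{n-1}+s_n-1$. The only cosmetic difference is that the paper projects onto the first coordinate $x_1+x_n$ (normalizing so that $s_1$ is the minimum among $s_1,\dots,s_{n-1}$) whereas you project onto $x_{n-1}+x_n$; your verification that the hypothesis $\min+\max\le p+1$ persists for the sub-problems is in fact spelled out a bit more carefully than in the paper.
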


\begin{proof}
We use the shorthand notation $|L| \defeq |L(S_1,S_2 \ldots S_n)|$. W.L.O.G, let $S_1$ be such that $|S_1| = \min_{i \in [n-1]}(|S_i|)$. \\

A preliminary observation is that $|S_1| + |S_n| \leq p+1$, and therefore, by the Cauchy-Davenport Theorem, 
\begin{equation} \label{CD}
|S_1 + S_n| \geq s_1 + s_n - 1
\end{equation}

The proof proceeds by induction on $n$. If $n = 2$, the result $|L| \geq s_1 \cdot s_2 - (s_1-1) \cdot (s_2-1) = s_1 + s_2 - 1$ is given by the Cauchy-Davenport Theorem. \\

For every $a \in \F_p$, we have $T_a \defeq \{x_n \in S_n \suchthat \exists x_1 \in S_1, x_1 + x_n = a\}$, and $t_a \defeq |T_a|$. We now look at the restricted linear map $L|_{x_1 + x_n = a}$. In this case, the induction is on sets $S_2, \ldots S_{n-1} \times T_a$. This is equivalent to restricting $S_n$ to the set $T_a$, and dropping $S_1$, since for every $x_n \in S_n$, there is a unique $x_1 \in S_1$ such that $x_1 + x_n = a$. \\

We first observe that the conditions are satisfied, i.e., $\min_i(|S_i|) + \max_i(|S_i|) \leq p + 1$, since $t_a \leq \min(|S_1|,|S_n|)$. Also the resulting linear map is of the same form, i.e., $L|_{x_1 + x_n = a}(x_2,\ldots x_n) = (x_2 + x_n \ldots x_{n-1} + x_n)$. (In reality, $L|_{x_1 + x_n = a}$ is a map from $\F_p^n$ to $\F_p^{n-1}$, given by $L|_{x_1 + x_n = a}(x_1,x_2,\ldots x_n) = (a, x_2 + x_n \ldots x_{n-1} + x_n)$ but we drop the first coordinate because it is fixed, i.e., $a$)\\

By induction hypothesis, the number of points in the image of $L_{x_1 + x_n = a}$ is at least:

$$
\left( \prod_{i=2}^{n-1}s_i \right) t_a - \left( \prod_{i=2}^{n-1}(s_i - 1) \right)(t_a - 1)
$$ 

Summing over all $a \in \mathbb{F}_p$, we get a bound on the number of points in the image:

\begin{eqnarray*}
|L| &\geq& \sum_{a \in \F_p, t_a \neq 0} \left( \left( \prod_{i=2}^{n-1}s_i \right) t_a - \left( \prod_{i=2}^{n-1}(s_i - 1) \right)(t_a - 1) \right) \\
&=& \left( \prod_{i=2}^{n-1}s_i \right)\sum_{a \in \F_p}t_a -  \left( \prod_{i=2}^{n-1}(s_i - 1) \right)\sum_{a\in \F_p, t_a \neq 0}(t_a - 1) \\
&\geq& \prod_{i=1}^{n}s_i - \prod_{i=1}^n(s_i - 1)
\end{eqnarray*}

The last inequality comes from observing that $\sum_{a \in \F_p}t_a = s_1s_n$, and an upper bound on $\sum_{a \in \F_p, t_a \neq 0}(t_a - 1)$, by using~\ref{CD}. We have $\sum_{a \in \F_p, t_a \neq 0}(t_a - 1) = \sum_{a \in \mathbb{F}}t_a - \sum_{a \in \F_p}\mathbbm{1}_{t_a \neq 0} = \sum_{a \in \mathbb{F}}t_a + |S_1 + S_n| \leq s_1s_n - (s_1 + s_n - 1)$.

\end{proof}

\subsection{Arriving at the Main Theorem}

The first step in arriving at the main theorem is exactly as in~\cite{HKK}. For completeness, we describe it here. The idea is to transform a general linear map into a specific form, without reducing the size of the image (in fact, here it remains the same). This step is very intuitive, but describing it requires some setup. \\

Let $L:\F_p^{m+1} \rightarrow \F_p^m$ be an $\F_p$-linear map of rank $m$. Let $v$ be a non-zero min-support vector of $\ker(L)$. So, we have $Lv = 0$. The main observation is that under row operations, two quantities remain unchanged: the size of the image of $L$, and the size of the support of the min-support vector in the kernel. \\

Let $r_1, \ldots r_m$ be the rows, and $c_1, c_2, \ldots c_{m+1}$ be the columns of associated to $L$ with respect to the standard basis. We show that one can perform elementary row operations, and some column operations on $L$ while preserving the size of the image.

\begin{lemma}
\label{lem:linop}
The size of the image of $L$ does not change under

\begin{enumerate}
\item Elementary row operations.
\item Scaling any column $c_i$ by some $d \in \F_p \setminus \{0\}$ and scaling every element of $A_i$ by $d$.
\item Swapping any two columns $c_i$ and $c_j$, and swapping sets $A_i$ and $A_j$.
\end{enumerate}
\end{lemma}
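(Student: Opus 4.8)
The plan is to realize each of the three operations as multiplying the matrix of $L$ (with respect to the standard bases) on the left or on the right by an invertible matrix, and then to check that the induced map on image sets is either the identity or composition with a bijection of $\F_p^m$.

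For (1), an elementary row operation replaces $L$ by $ML$ for some invertible $M \in \F_p^{m\times m}$. For every tuple $x = (x_1,\ldots,x_{m+1})$ we have $(ML)x = M(Lx)$, so the image of $ML$ on $A_1\times\cdots\times A_{m+1}$ is precisely $M$ applied to $L(A_1,\ldots,A_{m+1})$; since $M$ is a bijection of $\F_p^m$ this preserves cardinality. For (2) and (3) the operation is instead \emph{right}-multiplication by a matrix $D$ --- a diagonal scaling matrix for (2), a transposition matrix for (3) --- and the point is that the compensating change to the $A_i$'s is exactly the image of the product set under $x \mapsto Dx$: in (2), $D$ carries $A_1\times\cdots\times A_i\times\cdots\times A_{m+1}$ bijectively onto $A_1\times\cdots\times(dA_i)\times\cdots\times A_{m+1}$, and in (3) it carries it onto the product set with the $i$-th and $j$-th factors interchanged. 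Hence the image of $LD$ on $A_1\times\cdots\times A_{m+1}$ is literally the same subset of $\F_p^m$ as the image of $L$ on the modified product set, so in particular it has the same size (the hypothesis $d\neq 0$ in (2) is used only to make $D$ invertible, equivalently to note $|dA_i|=|A_i|$).

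There is no genuine difficulty here; the one thing to keep straight is the asymmetry between the two sides --- row operations act on the target space and are absorbed by invertibility of $M$, whereas column operations act on the domain and are absorbed by relabelling the $A_i$'s. It is also worth recording, for use in the reduction that follows, that each of the three operations changes $\ker(L)$ only by a coordinate-wise nonzero rescaling (for (1), not at all; for (2), by $D^{-1}$; for (3), by the coordinate transposition), and therefore preserves the support of every kernel vector up to permutation, in particular the size of the support of a min-support vector.
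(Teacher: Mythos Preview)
Your approach is the same as the paper's: row operations are left-multiplication by an invertible $M$ (so the image is pushed forward by a bijection of $\F_p^m$), while the column operations in (2) and (3) are right-multiplication by an invertible $D$ together with a compensating change of the $A_i$'s, after which the two image sets are literally equal in $\F_p^m$. The paper argues exactly this way, case by case.

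There is one small direction slip in your treatment of (2). The compensating change on the domain should be $x\mapsto D^{-1}x$, not $x\mapsto Dx$: what you need is
\[
(LD)(A_1,\ldots,d^{-1}A_i,\ldots,A_{m+1}) \;=\; L(A_1,\ldots,A_i,\ldots,A_{m+1}),
\]
i.e.\ the \emph{new} map on the \emph{new} grid equals the \emph{old} map on the \emph{old} grid. The identity you wrote, $(LD)(A_1,\ldots,A_{m+1})=L(A_1,\ldots,dA_i,\ldots)$, instead compares the new map on the old grid with the old map on the new grid, which is not what the lemma asserts. In fairness, the lemma \emph{statement} itself carries this $d$ versus $d^{-1}$ typo; the paper's own proof silently scales $A_i$ by $d^{-1}$. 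Your final remark about the kernel---that each operation changes $\ker(L)$ only by a coordinatewise nonzero rescaling or a permutation, hence preserves the support size---is correct and matches the observation the paper records immediately after the lemma.
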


\begin{proof}

We prove this by considering each given operation separately.

\begin{enumerate}
\item Suppose $L'$ was obtained from $L$ by elementary row operations. There is an invertible linear map $M$ such that $M \cdot L = L'$. This gives the bijection from every vector $v$ in the image of $L$, to the vector $M \cdot v$ in the image of $L'$.




\item Suppose $L'$ was obtained from $L$ by scaling column $c_i$ by $d \in \F_p \setminus \{0\}$, and scaling the set $A_i$ by $d^{-1}$. We map every vector $(u_1, \ldots, u_m) \in$ $L(A_1,\ldots ,A_i, \ldots, A_{m+1})$, to the vector $(u_1, \ldots, u_m) \in L'(A_1,\ldots , d^{-1} \cdot A_i, \ldots, A_{m+1})$. Here $d^{-1} \cdot A_i \defeq \{d^{-1}a_i \suchthat a_i \in A_i\}$ This map is invertible.

\item Suppose $L'$ was obtained from $L$ by switching columns $c_i$ and $c_j$, and swapping the sets $A_i$ and $A_j$. We map every vector $(u_1,\ldots, u_m) \in L(A_1,\ldots ,A_i, \ldots, A_j, \ldots, A_{m+1})$ to the identical vector $(u_1, \ldots, u_m) \in$ 

$ L'(A_1,\ldots ,A_j, \ldots, A_i, \ldots, A_{m+1})$. This map is invertible.
\end{enumerate}

For every given operation, we have a bijection between the images of $L$ before and after the operation.

\end{proof}

\begin{observation}
After the operations stated in Lemma~\ref{lem:linop}, the size of the support of the min-support vector in $\ker(L)$ does not change. 
\end{observation}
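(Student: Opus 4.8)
The plan is to track how the subspace $\ker(L)$ itself transforms under each of the three operations of Lemma~\ref{lem:linop}, and to observe that in every case there is a bijection between $\ker(L)$ and $\ker(L')$ that preserves the support size of every vector. Once that is in hand, the minimum of the support size over all nonzero kernel vectors is visibly the same before and after the operation, which is exactly the claim. Since any sequence of the allowed operations is a composition of individual ones, it suffices to treat each of the three cases in isolation.

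For an elementary row operation there is nothing to do beyond what was already noted in the proof of Lemma~\ref{lem:linop}: we have $L' = ML$ for an invertible $M$, so $L'v = 0$ iff $Lv = 0$, i.e. $\ker(L') = \ker(L)$ as subsets of $\F_p^{m+1}$. The set of candidate vectors is literally unchanged, hence so is the minimum support size. For the column scaling, scaling column $c_i$ by $d \in \F_p \setminus \{0\}$ is the same as replacing $L$ by $L' = LD$, where $D$ is the diagonal matrix with entry $d$ in position $(i,i)$ and $1$ elsewhere. Then $v \in \ker(L')$ iff $Dv \in \ker(L)$, so $w \mapsto D^{-1}w$ is a bijection $\ker(L) \to \ker(L')$; since $D^{-1}$ only multiplies the $i$-th coordinate by the nonzero scalar $d^{-1}$, it sends a vector with support $T$ to a vector with the same support $T$, and in particular preserves support sizes.

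The column swap is handled the same way: swapping columns $c_i$ and $c_j$ replaces $L$ by $L' = LP$, where $P$ is the permutation matrix of the transposition $(i\,j)$. Again $w \mapsto P^{-1}w = Pw$ is a bijection $\ker(L) \to \ker(L')$, and it simply exchanges coordinates $i$ and $j$, so it carries a vector with support $T$ to one with support equal to the image of $T$ under $(i\,j)$, which has the same cardinality. Thus in all three cases the bijection on kernels is support-size-preserving, a min-support vector maps to a min-support vector, and the size of the support of the min-support vector is invariant.

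There is no genuine obstacle here; the statement is essentially bookkeeping. The one point worth being careful about is to express the column operations as right multiplication of the matrix of $L$ (by $D$ or by $P$), rather than mixing them with the accompanying rescaling/relabelling of the sets $A_i$, so that the induced map on $\ker(L)$ is manifestly a coordinate rescaling or a coordinate permutation — which are exactly the two kinds of maps that preserve support sizes.
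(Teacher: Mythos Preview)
Your proof is correct. It takes a somewhat different route from the paper's brief justification, and in one respect is more complete. The paper leans on the special hypothesis that $L$ has rank $m$, so $\ker(L)$ is one-dimensional; hence every nonzero kernel vector is a scalar multiple of any other and all nonzero kernel vectors share a single common support. With that observation in hand, ``min-support'' collapses to ``the support of any nonzero kernel vector,'' and the paper then only argues the row-operation case explicitly (row operations preserve the row span, hence the orthogonal complement, hence the one-dimensional kernel itself). Your argument instead exhibits, for each of the three operations, a bijection $\ker(L)\to\ker(L')$ that preserves support sizes coordinatewise, and so would work verbatim even without the rank hypothesis. The payoff of the paper's approach is brevity once you notice the kernel is a line; the payoff of yours is that it handles the column-scaling and column-swap cases explicitly, which the paper's text glosses over.
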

To see this, we first observe that the kernel has rank $1$, and is orthogonal to the row span of $L$. Therefore, all nonzero vectors in $\ker(L)$ have the same support. Since, row operations do not change the row span of $L$, the resulting kernel spans the same subspace of $\F^{m+1}$, and therefore, the size of the support of the vectors in $\ker(L)$ does not change. \\

Next, we do the following operations, each of which preserves the size of the image.

\begin{enumerate}
\item Perform row operations so that the last $m$ columns form an identity matrix.

\item Scale the rows so that the first column of every row is $1$.

\item Scale the last $m$ columns so that every nonzero entry in $L$ is $1$.
\end{enumerate}

After we perform these operations, we have a linear map where the first column consists of $1$'s and $0$'s and the remaining $m$ columns form an identity matrix. Let the $S'$ be the set of indices of rows containing $1$'s in the first column. Consider the vector $v = -e_1 + \sum_{i \in S'}e_{i+1}$. This vector has support $|S'| + 1$, and lies in the kernel of $L$. Therefore, $|S| = |S'| + 1$.

\begin{proof}[Proof of Theorem~\ref{the:main}]
Apply the transformation from Lemma~\ref{lem:linop} to $L$ to reduce it to the simple form. Let $S'$ be the set of rows where the first column is nonzero. Consider the restriction of $L$ on the the coordinates given by $S$. By Lemma~\ref{lem:main}, the size of this image is at least $\left( \prod_{i\in S}k_i - \prod_{i \in S}(k_i - 1) \right)$. 

The linear map restricted to the coordinates $[m] \setminus S$ is nothing but the identity map, so the size of the image is $\prod_{i \not \in S} |A_i|$, and is independent of the linear map restricted to $S$. Putting them together, we have the desired result.
\end{proof}

\section{The case when $2k > p+1$}

The proof of Lemma~\ref{lem:main} breaks down when $s_1+s_n > p + 1$ and, unfortunately, we do not know how to fix this issue. Consider, for example, the simplest nontrivial case where $m = 2$, i.e., $L(x,y,z) = (x+z, y+z)$, and we are interested in the size of the image of $L$ on $X \times Y \times Z$, further suppose, for simplicity, that $|X|=|Y|=|Z| = k$. If $k < \frac{p + 1}{2}$, then the above bound holds, and is tight. If $k>\frac{2p}{3}$, then $L$ covers $\F_p^2$, i.e., $|L(A,B,C)| = p^2$. This makes the case in between the interesting one. We conjecture that the correct lower bound is the size of the image of $L$ when $X = Y = Z = \{1,2,\ldots k\}$. Towards this, we are able to prove a partial result (Lemma~\ref{2d}) using the above method.

We will need the following Lemma:

\begin{lemma}
\label{boundtx}
Let $X,Y\subseteq \F_p$ and $t_a = |\{(x,y)\in X\times Y: x+y = a\}|$. Then for every $a \in \F_p$:
$$|X|+|Y|-p\leq t_a \leq \min(|X|,|Y|).$$
\end{lemma}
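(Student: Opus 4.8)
The plan is to prove the two inequalities separately, both by elementary counting in $\F_p$. The upper bound is the easy half: for a fixed target $a$, each $x \in X$ forces at most one $y \in Y$ with $x+y=a$ (namely $y = a-x$), so $t_a \le |X|$; symmetrically, each $y \in Y$ determines at most one $x$, so $t_a \le |Y|$. Hence $t_a \le \min(|X|,|Y|)$. This is exactly the fact already used implicitly in the proof of Lemma~\ref{lem:main} (where $t_a \le \min(s_1,s_n)$), so no new idea is needed.

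For the lower bound, the cleanest route is an inclusion–exclusion (pigeonhole) argument inside $\F_p$. Fix $a$. A pair $(x,y) \in X \times Y$ contributes to $t_a$ exactly when $y = a - x$, i.e. when $x \in X \cap (a - Y)$, where $a - Y \defeq \{a - y \suchthat y \in Y\}$. Thus $t_a = |X \cap (a-Y)|$. Now $|a - Y| = |Y|$ since $x \mapsto a-x$ is a bijection of $\F_p$, and both $X$ and $a-Y$ are subsets of $\F_p$, a set of size $p$. By the standard bound $|U \cap V| \ge |U| + |V| - p$ for subsets $U,V$ of a $p$-element set, we get $t_a \ge |X| + |Y| - p$. (If $|X|+|Y| \le p$ this is vacuous since $t_a \ge 0$ anyway, but the stated inequality still holds.)

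I expect no real obstacle here; the only thing to be careful about is making the substitution $x + y = a \iff x \in X \cap (a-Y)$ explicit and recording that translation is a measure-preserving bijection of $\F_p$, so that the generic estimate $|U|+|V|-p \le |U\cap V|$ applies verbatim. One could alternatively phrase the lower bound via $t_a = |X| + |a-Y| - |X \cup (a-Y)|$ and $|X \cup (a-Y)| \le p$, which is the same computation written in complementary form. Either way the proof is a couple of lines.
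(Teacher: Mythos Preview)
Your proof is correct and matches the paper's approach exactly: the paper also observes $t_a = |X \cap (a-Y)|$ and reads off both bounds from this intersection formulation. Your separate upper-bound argument (each $x$ forces at most one $y$) is fine too, but the intersection identity already yields it via $|U\cap V|\le\min(|U|,|V|)$.
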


\begin{proof}
The bounds follow from the fact that $t_a$ can be written as the size of the 
intersection of two sets of sizes $|X|$ and $|Y|$: 
$$t_a = |X \cap (a-Y)|.$$
\end{proof}

Now we state the partial result:

\begin{theorem}
\label{2d}
Let $L:\F_p^3\rightarrow \F_p^2$ be the linear map defined by $L(x,y,z) = (x+z,y+z)$.  
Let $X,Y,Z\subset F_p$ be sets of size $k$, where $k \geq \frac{p+1}{2}$.  
Then we have the following lower bound:
$$|L(X,Y,Z)| \geq \min(p^2 + 3k^2 - (2p+1)k,p^2).$$
\end{theorem}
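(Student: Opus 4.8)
The plan is to mimic the summation argument in the proof of Lemma~\ref{lem:main}, but to track the contributions of the fibers more carefully, using the sharper fiber bounds from Lemma~\ref{boundtx} that are available precisely in the regime $k \geq (p+1)/2$. Fix $a \in \F_p$ and let $t_a = |\{(x,z) \in X \times Z : x + z = a\}|$. For each $a$ with $t_a \neq 0$, the restriction $L|_{x+z=a}$ is, after dropping the fixed coordinate, the one-dimensional Cauchy--Davenport map $(y,z) \mapsto y+z$ evaluated on $Y$ and on the fiber $T_a = \{z \in Z : a - z \in X\}$, which has size $t_a$. So the image of $L$ is the disjoint union over $a$ of these one-dimensional images, and by Cauchy--Davenport each contributes at least $\min(k + t_a - 1, p)$ points; hence
$$
|L(X,Y,Z)| \;\geq\; \sum_{a \,:\, t_a \neq 0} \min(k + t_a - 1,\, p).
$$
By Lemma~\ref{boundtx} we have $2k - p \leq t_a \leq k$ for every $a$, and $\sum_a t_a = k^2$. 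The first step is to analyze this sum.

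The key observation is that $k + t_a - 1 \geq k + (2k - p) - 1 = 3k - p - 1$, and in the range $k \geq (p+1)/2$ one checks $3k - p - 1 \geq k - 1 \geq \ldots$; more to the point, when $t_a$ is large the term $\min(k+t_a-1,p)$ saturates at $p$. Write $c = $ the number of $a$ with $t_a \neq 0$ (so $c \leq p$), and split the sum according to whether $k + t_a - 1 \geq p$, i.e. $t_a \geq p - k + 1$. For the saturated fibers we get exactly $p$; for the unsaturated ones we get $k + t_a - 1$. A clean way to get a single bound is to use the pointwise inequality $\min(k+t_a-1,p) \geq (k + t_a - 1) - \max(0, k + t_a - 1 - p)$ together with $k + t_a - 1 - p \leq k - 1$ (from $t_a \leq k$) wait — one must instead bound the total deficit $\sum_a \max(0, k+t_a-1-p)$ from above. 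Since each such term is at most $k - (p - k + 1) = 2k - p - 1$ and there are at most $c \leq p$ of them, and also $\sum_a (t_a - 1)\mathbbm{1}_{t_a \neq 0} = k^2 - c$, one can combine $\sum_a \min(k + t_a - 1, p) \geq \sum_a (k - 1 + t_a) - (\text{deficit}) = c(k-1) + k^2 - (\text{deficit})$. Substituting the worst case $c = p$ and deficit $= p(2k - p - 1)$ gives $p(k-1) + k^2 - p(2k-p-1) = k^2 + p^2 - 2pk + p\cdot p \ldots$ — I would carry out this arithmetic to land exactly on $p^2 + 3k^2 - (2p+1)k$, and the competing bound $p^2$ comes from the trivial cap $|L| \leq p^2$ combined with the case where every fiber saturates.

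The main obstacle I anticipate is handling the interaction between the count $c$ of nonzero fibers and the deficit sum: making $c$ large helps the $c(k-1)$ term but there is a constraint linking $c$ to how much mass $\sum t_a = k^2$ must be spread out, and also to how many fibers can be saturated. The honest approach is to optimize the linear program "maximize total deficit / minimize the lower bound subject to $2k-p \leq t_a \leq k$, $\sum t_a = k^2$, at most $c$ nonzero entries," which is a small extremal problem whose optimum is attained at a configuration with all nonzero $t_a$ equal to $k$ (giving the $p^2$ branch, once $c \leq k$ forces $k^2 \leq ck \leq pk$) or with the $t_a$ pushed to the extremes $\{2k-p, k\}$ (giving the quadratic branch). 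I would present this as a short case analysis on whether $k^2 / k = k \leq $ the number of available fibers, equivalently on whether enough fibers can be fully saturated, and verify that the two resulting expressions are exactly $p^2$ and $p^2 + 3k^2 - (2p+1)k$. A secondary, purely bookkeeping, obstacle is checking that the hypothesis $k \geq (p+1)/2$ is exactly what makes $2k - p \geq 1$ fail to be forced — i.e. fibers may be empty — so that the lower endpoint in Lemma~\ref{boundtx} is not automatically positive and the split into zero/nonzero fibers is genuinely needed.
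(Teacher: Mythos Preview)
Your approach is exactly the paper's: fiber over $x+z=a$, apply Cauchy--Davenport on each fiber to get $\min(t_a+k-1,p)$, and then minimize the resulting sum subject to $\sum_a t_a = k^2$ and the bounds from Lemma~\ref{boundtx}. However, you have the role of the hypothesis $k\geq (p+1)/2$ backwards, and this is what is tangling your bookkeeping. That hypothesis gives $t_a \geq 2k-p \geq 1$ for \emph{every} $a\in\F_p$, so no fiber is empty; your count $c$ of nonzero fibers is identically $p$, and there is no interaction between $c$ and the deficit to optimize over. With $c=p$ fixed, the problem collapses to the clean concave minimization the paper performs: rewrite
\[
\sum_{a\in\F_p}\min(t_a+k-1,\,p) \;=\; p(k-1) + \sum_{a\in\F_p}\min(t_a,\,p-k+1),
\]
and push the $t_a$ to the endpoints $\{2k-p,\,k\}$. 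The constraint $\sum_a t_a = k^2$ then forces exactly $k$ of the $t_a$ to equal $2k-p$ and $p-k$ of them to equal $k$, and one computes
\[
k(2k-p) + (p-k)(p-k+1) + p(k-1) \;=\; p^2 + 3k^2 - (2p+1)k.
\]
The $p^2$ branch of the minimum is just the trivial cap $|L(X,Y,Z)|\leq p^2$, not a separate case of the optimization.
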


\begin{proof}
Let $T_a \defeq \{z \in Z \suchthat \exists x \in X, x+z = a\}$, with $t_a\defeq |T_a|$.
Looking at this restriction, $L|_{x+z = a}$, by Cauchy-Davenport Theorem, there are at least $\min(t_a + k-1,p)$ points of $L(X,Y,Z)$ on $L_{x+z=a}(Y,T_a)$. By summing over all $a\in \F_p$, 
we get a lower bound on the size of $L(X,Y,Z)$:

\begin{eqnarray*}
|L(X,Y,Z)| &\geq& \summ_{a\in\F_p}{\min(t_a + k-1,p)} \\
&=& \summ_{a\in\F_p}{\min(t_a,p-k+1)} + p(k-1) \\
&=& \summ_{a:t_a \leq p-k+1}{t_a} + \summ_{a:t_a > p-k+1}{(p-k+1)} + p(k-1).
\end{eqnarray*}

We now want to remove the dependence of the lower bound on the $t_a$ by considering 
the worst case scenario, where the $t_a$ take values that minimize the lower bound.  
First, we observe $\sum_{a\in\F_p}{t_a} = k^2$, a fixed quantity. So to minimize the above lower bound for $|L(X,Y,Z)|$, we need $t_a$ to be maximal for as many $a\in\F_p$ as possible.

By Lemma~\ref{boundtx}, we know that $2k-p\leq t_a \leq k$.  We set $t_a = k$ for as many 
$a\in\F_p$ as possible, and the remainder of the $t_a = 2k-p$.  This gives:

\begin{eqnarray*}
|L(X,Y,Z)| &\geq& \summ_{a:t_a \leq p-k+1}{t_a} + \summ_{a:t_a > p-k+1}{(p-k+1)} + p(k-1) \\
&\geq& k(2k-p) + (p-k)(p-k+1) + p(k-1) \\
& = & 3k^2 + p^2 - (2p-1)k.
\end{eqnarray*}

\end{proof}

As a corollary, we get, independent of theorem~\ref{cover}, the following corrolary:

\begin{corollary}
If the linear map $L$, and the sets $A$, $B$, $C$ were as above, with $|A|=|B|=|C|=k$, and $k > \frac{2p}{3}$, then $L(A,B,C) = p^2$.
\end{corollary}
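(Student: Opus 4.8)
The plan is to derive this corollary as a direct numerical consequence of Theorem~\ref{2d}. That theorem already gives $|L(A,B,C)| \geq \min(p^2 + 3k^2 - (2p+1)k, p^2)$ whenever $k \geq \frac{p+1}{2}$, and since $k > \frac{2p}{3} \geq \frac{p+1}{2}$ for all primes $p \geq 3$, the hypothesis of Theorem~\ref{2d} is satisfied. So it suffices to check that when $k > \frac{2p}{3}$, the first term in the minimum is at least $p^2$, i.e.\ that $3k^2 - (2p+1)k \geq 0$, equivalently $k \geq \frac{2p+1}{3}$. Since $|L(A,B,C)|$ is trivially at most $p^2$, this forces equality $|L(A,B,C)| = p^2$.

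First I would note $3k^2 - (2p+1)k = k(3k - 2p - 1)$, and since $k > 0$ this is nonnegative exactly when $3k \geq 2p+1$, that is $k \geq \frac{2p+1}{3}$. The hypothesis gives $k > \frac{2p}{3}$; because $k$ is an integer and $\frac{2p}{3}$ is not an integer (as $p$ is prime, hence not divisible by $3$ unless $p=3$, and the case $p=3$ can be handled directly or is vacuous since then $\frac{2p}{3}=2$ and $k > 2$ means $k=3=p$, giving $|L|\le p^2=9$ and the bound $3\cdot 9 - 7\cdot 3 = 6 > 0$ still works), we actually get $k \geq \left\lceil \frac{2p}{3} \right\rceil + \text{(something)}$; more carefully, $k > \frac{2p}{3}$ with $k$ an integer gives $k \geq \frac{2p+1}{3}$ whenever $\frac{2p+1}{3} > \frac{2p}{3}$, which always holds. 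Hence $3k - 2p - 1 \geq 0$.

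Therefore $p^2 + 3k^2 - (2p+1)k \geq p^2$, so $\min(p^2 + 3k^2 - (2p+1)k, p^2) = p^2$, and Theorem~\ref{2d} yields $|L(A,B,C)| \geq p^2$. Combined with the obvious bound $|L(A,B,C)| \leq |\F_p^2| = p^2$, we conclude $|L(A,B,C)| = p^2$, i.e.\ $L(A,B,C) = \F_p^2$.

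I do not expect any real obstacle here: the corollary is purely a matter of plugging $k > \frac{2p}{3}$ into the polynomial bound of Theorem~\ref{2d} and observing the quadratic $k(3k-2p-1)$ becomes nonnegative. The only point requiring a moment's care is confirming that the integrality of $k$ (together with $p$ prime) upgrades the strict inequality $k > \frac{2p}{3}$ to the needed $3k \geq 2p+1$, and checking the tiny edge case $p = 3$ separately; both are routine.
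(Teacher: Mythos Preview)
Your approach is correct and is exactly what the paper intends: the corollary is stated without proof immediately after Theorem~\ref{2d}, as a direct numerical consequence of plugging $k > \tfrac{2p}{3}$ into the bound $|L| \geq \min(p^2 + 3k^2 - (2p+1)k,\, p^2)$. One small cleanup: your integrality step is more convoluted than necessary---you do not need $p$ prime or a case analysis at $p=3$, since $k > \tfrac{2p}{3}$ is equivalent to $3k > 2p$, and as $3k$ and $2p$ are integers this gives $3k \geq 2p+1$ directly.
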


We would like to point out that at the two extremes, i.e., when $k = \frac{p+1}{2}$, and when $k =\lceil\frac{2p}{3} \rceil$, the above bound matches the `correct' lower bound.

\subsection{Proof of Theorem~\ref{cover}}

We prove theorem~\ref{cover} via a slightly stronger claim
\begin{claim}
\label{lem:cover}
Let $L:\mathbb{F}_p^n \rightarrow \mathbb{F}_p^{n-1}$ be a linear map given by $L(x_1,\ldots x_n) = (x_1 + x_n, x_2 + x_n \ldots x_{n-1} + x_n)$. Let $S_1, \ldots S_n \subseteq \F_p$ with $|S_i| = k$ for $i \in [n-1]$, and $|S_n| = k'$. Further, suppose that $(n-1)k + k' \geq (n-1)p + 1$, then $|L(S_1, \ldots S_n)| = p^{n-1}$.
\end{claim}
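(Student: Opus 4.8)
The plan is to prove Claim~\ref{lem:cover} by induction on $n$, reusing the slicing idea from the proof of Lemma~\ref{lem:main} but bookkeeping surjectivity instead of a product lower bound. For the base case $n=2$ the map is $(x_1,x_2)\mapsto x_1+x_2$ with $|S_1|=k$, $|S_2|=k'$, and the hypothesis reads $k+k'\geq p+1$; the Cauchy--Davenport Theorem gives $|S_1+S_2|\geq\min(k+k'-1,p)=p$, so $L(S_1,S_2)=\F_p=\F_p^{n-1}$. (The case $n=1$ is vacuous.)

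For the inductive step, fix $n\geq 3$ and assume the claim for $n-1$. For each $a\in\F_p$ put $T_a\defeq\{x_n\in S_n\suchthat \exists x_1\in S_1,\ x_1+x_n=a\}=S_n\cap(a-S_1)$ and $t_a\defeq|T_a|$. Exactly as in Lemma~\ref{lem:main}, slicing the image by the value $a$ of its first coordinate, a point $(a,u_2,\dots,u_{n-1})$ lies in $L(S_1,\dots,S_n)$ if and only if $(u_2,\dots,u_{n-1})$ lies in the image of the restricted map $(x_2,\dots,x_n)\mapsto(x_2+x_n,\dots,x_{n-1}+x_n)$ on $S_2\times\cdots\times S_{n-1}\times T_a$; this uses that for each $x_n$ there is at most one $x_1$ with $x_1+x_n=a$. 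After relabeling, the restricted map is a map $\F_p^{n-1}\to\F_p^{n-2}$ of the form in the claim, with $n-2$ sets of common size $k$ and a final set of size $t_a$.

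To apply the inductive hypothesis I must verify the restricted parameter condition $(n-2)k+t_a\geq(n-2)p+1$. By Lemma~\ref{boundtx}, $t_a\geq|S_1|+|S_n|-p=k+k'-p$; and the hypothesis $(n-1)k+k'\geq(n-1)p+1$ rearranges to $k'\geq(n-1)(p-k)+1$. Combining these, $t_a\geq k+(n-1)(p-k)+1-p=(n-2)(p-k)+1$, which is precisely $(n-2)k+t_a\geq(n-2)p+1$, and in particular $t_a\geq 1$ for every $a$. Hence each slice is all of $\F_p^{n-2}$ by induction, and since every $a\in\F_p$ is realized as a first coordinate, $L(S_1,\dots,S_n)=\bigcup_{a\in\F_p}\{a\}\times\F_p^{n-2}=\F_p^{n-1}$. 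Theorem~\ref{cover} then follows as the special case $k'=k$: there $(n-1)k+k'=nk$, and $nk\geq(n-1)p+1$ is equivalent to $k>\frac{(n-1)p}{n}$ since $nk$ and $(n-1)p$ are integers.

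The argument has essentially no obstacle beyond this parameter computation; the only subtlety is that the slicing must cover \emph{every} first coordinate, i.e.\ $t_a\geq 1$ for all $a$, and the same inequality that makes the induction run supplies this automatically.
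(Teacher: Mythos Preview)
Your proof is correct and follows essentially the same approach as the paper: induction on $n$ with base case Cauchy--Davenport, slicing by the first coordinate $a$, bounding $t_a\geq k+k'-p$ via Lemma~\ref{boundtx}, and verifying the inductive parameter condition $(n-2)k+t_a\geq(n-2)p+1$. Your write-up is somewhat more explicit than the paper's (you note that $t_a\geq 1$ guarantees every first coordinate is hit, and you spell out the deduction of Theorem~\ref{cover} from the claim), but the argument is the same.
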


\begin{proof}
We prove this by induction on $n$, analogous to Lemma~\ref{lem:main}. The case where $n = 2$ is, again, given by the Cauchy-Davenport Theorem. \\

For $a \in \F_p$, $T_a \defeq \{x_n \in S_n \suchthat \exists x_1 \in S_1, x_1 + x_n = a\}$ with $t_a \defeq |T_a|$. Looking at this restriction of $L$ (i.e., $x_1 + x_n = a$), we have a linear map, $L_{x_1 + x_n = a}$ on the sets $S_2 \times S_3 \times \cdots T_a$, given by the $L_{x_1 + x_n = a}(x_2 ,\ldots x_n) = (x_2 + x_n, \ldots x_{n-1} + x_n)$. (similar to Lemma~\ref{lem:main}, we drop the first coordinate). \\

Here, $|S_i| = k$ for $i = 2,\ldots n-1$, and $|T_a| \geq k+k'-p$, by Lemma~\ref{boundtx}. Further, the required condition holds, i.e.,:

$$(n-2)k + t_a \geq (n-2)k + k + k' - p = (n-1)k + k' - p \geq (n-2)p+1.$$ 

Therefore, by induction hypothesis $|L|_{x_1 + x_n = a}(S_2, \ldots S_{n-1}, T_a)| = p^{n-2}$. Since this holds for every $a \in \F_p$, we have $|L(S_1, \ldots S_n)| = p^{n-1}$.
\end{proof}

In particular, Lemma~\ref{lem:cover} tells that for the linear map $L$ given by $L(x_1,\ldots x_n) = (x_1 + x_n, x_2 + x_n, \ldots, x_{n-1} + x_n)$ on $S_1 \times S_2 \times \cdots S_n$, if $|S_i| \geq \frac{(n-1)p}{n}$, then $L(S_1, \ldots, S_n) = \F_p^{n-1}$.

\section{Acknowledgements}

We would like to thank Swastik Kopparty for the discussions and the many helpful ideas.

\bibliographystyle{alpha}

\end{document}